\newtheorem{definition}{Definition}
\newtheorem{theorem}{Theorem}
\newtheorem{lemma}{Lemma}
\newtheorem{corollary}{Corollary}
\newtheorem{example}{Example}
\newtheorem{remark}{Remark}
\begin{document}

\title{A note on graph compositions and their connection to minimax of set partitions}

\author{Todd Tichenor}

\maketitle

\begin{abstract}
A \emph{graph composition} is a partition of the vertex set such that each member of the partition induces a connected subgraph, and the \emph{composition number} of a graph is the number of possible graph compositions. A partition of a set $S$ of consecutive labelled vertices is said to have a \emph{minimax} $v\in S$ if the label of $v$ is the smallest label in the set of all maximum labels over all members of the partition. This paper exhibits a recursive formula for the composition number of a certain class of graphs and establishes a connection between the composition numbers of this class of graphs and that of the minimax of partitions of a labelled set (the minimum label of the set of all maximum labels over every member of the partition).
\end{abstract}

\section{Introduction} 
Graph compositions were introduced by Knopfmacher and Mays in \cite{KM}. Given a graph $G$, a \emph{graph composition} of $G$ is a partition of the vertex set such that each member induces a connected subgraph (called components). The composition number of a graph $G$ is the number of possible graph compositions. The composition number is denoted by $C(G)$. Graph compositions of $K_n$, a complete graph on $n$ vertices, can be thought of as merely an illustration of set partitions of the vertex set. Note that graph compositions and components of graph compositions are also graphs, and will be used in conjunction with other graph theory notation where necessary in this paper.
\\
\\
One area of research is the composition number of the graph $K_n^{-G}$, formed by removing the edges of $G$ from $K_n$ \cite{GCB}; specifically, previous work in \cite{TM} found formulae for certain classes of graphs which were deleted from $K_n$ (e.g. paths, cycles, stars, etc.). This paper focuses on the composition number of $K_n^{-K_m}$ for $0\leq m < n$ and its connection to the minimax of partitions of labelled vertices (formally defined later).  

\section{Composition number of $K_n^{-K_m}$}
\noindent We begin by establishing a recursive formula for $C(K_{n+m}^{-K_m})$.
\begin{theorem}
$C(K_{n+m}^{-K_m}) = \underset{i=0}{\overset{n-1}{\sum}}\binom{n-1}{i}\underset{j=0}{\overset{m}{\sum}}\binom{m}{j}C(K_{i+j}^{-K_j})$ for $n > m \geq 0$.
\end{theorem}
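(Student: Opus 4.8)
The plan is to run a standard ``distinguish a vertex and split off its block'' recursion. Write the vertex set of $K_{n+m}^{-K_m}$ as $U \cup W$, where $W = \{w_1,\dots,w_m\}$ is the independent set created by deleting the edges of $K_m$ and $U = \{u_1,\dots,u_n\}$ consists of the remaining $n$ vertices, each of which is adjacent to every other vertex of the graph. Since the hypothesis $n > m \ge 0$ forces $n \ge 1$, the set $U$ is nonempty, and I fix a distinguished vertex $v = u_n \in U$.

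First I would classify the graph compositions of $K_{n+m}^{-K_m}$ according to the block $B$ containing $v$. The key point is that $v$ is adjacent to all other vertices, so for any $B$ with $v \in B$ the subgraph induced on $B$ contains a spanning star centred at $v$ and is therefore connected; hence $B$ may be an arbitrary subset of $U \cup W$ containing $v$. Writing $B = \{v\} \cup A \cup D$ with $A \subseteq \{u_1,\dots,u_{n-1}\}$, $D \subseteq W$, and putting $i = |A|$ and $j = |D|$, there are exactly $\binom{n-1}{i}\binom{m}{j}$ subsets $B$ with these parameters, where $0 \le i \le n-1$ and $0 \le j \le m$.

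Next I would handle the complement. After removing $B$, the remaining $n-1-i$ vertices of $U$ are still adjacent to everything left, while the remaining $m-j$ vertices of $W$ are still pairwise nonadjacent and these are the only nonadjacencies among the survivors; thus the subgraph induced on $(U \cup W) \setminus B$ is isomorphic to $K_{(n-1-i)+(m-j)}^{-K_{m-j}}$, the degenerate cases (empty graph, totally disconnected graph) included. Deleting the block $B$ sets up a bijection between graph compositions of $K_{n+m}^{-K_m}$ whose $v$-block is $B$ and graph compositions of this induced subgraph, so the number of the former is $C\!\left(K_{(n-1-i)+(m-j)}^{-K_{m-j}}\right)$. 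Summing over all choices of $B$ yields
\[
C(K_{n+m}^{-K_m}) = \sum_{i=0}^{n-1}\sum_{j=0}^{m}\binom{n-1}{i}\binom{m}{j}\, C\!\left(K_{(n-1-i)+(m-j)}^{-K_{m-j}}\right).
\]

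Finally I would reindex via $i \mapsto n-1-i$ and $j \mapsto m-j$, using $\binom{n-1}{n-1-i} = \binom{n-1}{i}$ and $\binom{m}{m-j} = \binom{m}{j}$, which rewrites the right-hand side as the claimed sum $\sum_{i=0}^{n-1}\binom{n-1}{i}\sum_{j=0}^{m}\binom{m}{j} C(K_{i+j}^{-K_j})$. The argument is essentially mechanical; the one place that deserves genuine care is the identification of the induced subgraph on $(U\cup W)\setminus B$ as again a graph of the form $K_a^{-K_b}$ with $b \le a$ --- this is what makes every term on the right-hand side well defined and lets the recursion close --- together with recording the boundary values $C(K_0) = 1$ and $C(\overline{K_\ell}) = 1$. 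As a consistency check I would note that setting $m = 0$ recovers the classical Bell-number recursion $B_n = \sum_{i=0}^{n-1}\binom{n-1}{i} B_i$, since $C(K_n) = B_n$.
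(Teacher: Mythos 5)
Your proposal is correct and follows essentially the same argument as the paper: distinguish a vertex $v$ in the $K_n$ part, observe that its block can be any subset containing $v$, and note that the remaining vertices induce a smaller graph of the same form $K_a^{-K_b}$. The only cosmetic difference is that you parametrize by the vertices inside the $v$-block and then reindex, whereas the paper parametrizes directly by the vertices missing from it.
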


\begin{proof}
The graph $K_{n+m}^{-K_m}$ can be thought of as the join of $K_n$ and a graph $G_m$ of $m$ disjoint vertices (i.e. connecting every vertex of $G_m$ to all vertices of $K_n$). Mark a vertex $v\in V(K_n)$ and count all compositions of $K_{n+m}^{-K_m}$ by counting the number of components that contain $v$ and the number of compositions that contain the aforementioned component. Assuming that only $i$ vertices from $V(K_n)\backslash\{v\}$ and $j$ vertices from $M$ are missing from the component containing $v$, we have exactly $C(K_{i+j}^{-K_j})$ compositions that contain the component containing $v$. Summing over all possible choices for $i$ and $j$ yields $C(K_{n+m}^{-K_m}) = \underset{i=0}{\overset{n-1}{\sum}}\binom{n-1}{i}\underset{j=0}{\overset{m}{\sum}}\binom{m}{j}C(K_{i+j}^{-K_j})$.
\end{proof}

\begin{corollary}
 $C(K_n^{-K_m}) = \underset{i=0}{\overset{n-m-1}{\sum}}\binom{n-m-1}{i}\underset{j=0}{\overset{m}{\sum}}\binom{m}{j}C(K_{i+j}^{-K_j})$ for $n > m\geq 0$.
\end{corollary}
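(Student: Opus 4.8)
The plan is to read the Corollary off the Theorem by a change of variables. If $n>m$, put $n' := n-m\ge 1$; then $K_n^{-K_m}$ is the same graph as $K_{n'+m}^{-K_m}$ (both are $K_{n+m}$ with the edges of a fixed $K_m$ deleted, equivalently the join of a clique on $n-m$ vertices with a set $M$ of $m$ pairwise non-adjacent vertices). Applying the Theorem with its parameter $n$ replaced by $n'$ should give
\[
C(K_n^{-K_m})=\sum_{i=0}^{n'-1}\binom{n'-1}{i}\sum_{j=0}^{m}\binom{m}{j}C(K_{i+j}^{-K_j}),
\]
and substituting $n'=n-m$ turns the upper limit into $n-m-1$ and the binomial into $\binom{n-m-1}{i}$, which is exactly the claimed identity.

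The one point that genuinely needs care is the range of validity. The Theorem is stated with the hypothesis ``$n>m$,'' which if fed naively into the substitution would force $n-m>m$, i.e. only cover $n>2m$. So I would first observe that the proof of the Theorem does not actually use $n>m$: it only marks a vertex $v$ lying in the $K_n$-factor of the join, which merely requires that factor to be nonempty, i.e. that the Theorem's first parameter be a positive integer. Hence the Theorem's recursion holds for every positive value of its first parameter and arbitrary $m\ge 0$, and in particular for the value $n'=n-m\ge 1$ that arises here; this is what legitimizes the substitution for all $n>m$. Equivalently, one can simply re-run the marking argument directly on $K_n^{-K_m}$: since $n>m$ the clique part $K_{n-m}$ is nonempty, so pick $v$ there, and classify compositions by the vertex set of the component of $v$, which omits some $i$ of the $n-m-1$ other clique vertices and some $j$ of the $m$ vertices of $M$. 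As $v$ is adjacent to all of these, each of the $\binom{n-m-1}{i}\binom{m}{j}$ choices yields a connected component, and the induced subgraph on the omitted vertices is again a join of a clique $K_i$ with an independent set on $j$ vertices, i.e. $K_{i+j}^{-K_j}$, contributing $C(K_{i+j}^{-K_j})$ completions; summing over $0\le i\le n-m-1$ and $0\le j\le m$ gives the formula.

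I do not expect a real obstacle here — all the combinatorial content sits in the Theorem, and the Corollary is a reindexing — so the ``hard part'' is purely bookkeeping: being explicit that the Theorem's proof establishes the recursion for any positive first parameter (not just under ``$n>m$''), and checking that the re-indexed sum limits and binomial coefficients line up with the stated expression.
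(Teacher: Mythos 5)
Your proposal is correct and follows essentially the same route as the paper, whose entire proof is the substitution $n\mapsto n-m$ in the Theorem. Your extra care about the range of validity is in fact a genuine improvement: the paper's one-line substitution, taken literally with the Theorem's hypothesis $n>m$, would only cover $n>2m$, and your observation that the marking argument needs nothing more than a nonempty clique factor (first parameter $\geq 1$) — or your direct re-run of that argument on $K_n^{-K_m}$ — is exactly what justifies the Corollary for all $n>m\geq 0$.
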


\begin{proof}
 Setting $n=n-m$ in the theorem above yields the result of the corollary.
\end{proof} 
~\\
A table of values for $C(K_n^{-K_m})$ follows.
\\
\begin{figure}[h!]
\centering
\begin{tabular}{|c||c|c|c|c|c|c|c}
\hline
$n\slash m$ & 0 & 1 & 2 & 3 & 4 & 5 & 6 \\ \hline\hline
0 & 1 & & & & & &\\ \hline
1 & 1 & 1 & & & & &\\ \hline
2 & 2 & 2 & 1 & & & &\\ \hline
3 & 5 & 5 & 4 & 1 & & &\\ \hline
4 & 15 & 15 & 13 & 8 & 1 & & \\ \hline
5 & 52 & 52 & 47 & 35 & 16 & 1 & \\ \hline
6 & 203 & 203 & 188 & 153 & 97 & 32 & 1 \\ 
\end{tabular}
\caption{Table of values of $C(K_n^{-K_m})$}
\label{fig1}
\end{figure}
\newpage
\begin{remark}
The partial table displayed seems to suggest that
\\ $\underset{m=0}{\overset{n}{\sum}}C(K_n^{-K_m}) = B_{n+1}$, the $(n+1)^{\text{st}}$ Bell number. Intuitively, this may seem strange since $C(K_n) = B_n$; however, this result is not in error, and will be explored further in the following section. 
\end{remark}
\section{Connection to the minimax of set partitions}
\begin{definition}
Given a set $S=\{1,2,...,n\}$, $m\in S$ is said to be the \emph{minimax} of a partition of $S$ if its label is the smallest  in the set of all maximum labels over all members of the partition. The number of partitions of $S$ such that $m$ is the minimax of the partition is denoted by $T(n,m)$ \cite{OEIS}.
\end{definition} 
An example of the minimax of partitions follows.
\begin{example}
$S  = \{1,2,3\}$
\begin{figure}[h!]
\centering
\begin{tabular}{|c|c|c|}
\hline
Partition of $S$ & Set of maximum labels & Minimax \\ \hline
$\{\{1,2,3\}\}$ &$\{3\}$ & 3\\ \hline
$\{\{1\},\{2,3\}\}$ &$\{1,3\}$ & 1\\ \hline
$\{\{2\},\{1,3\}\}$ &$\{2,3\}$ & 2\\ \hline
$\{\{3\},\{1,2\}\}$ &$\{2,3\}$ & 2\\ \hline
$\{\{1\},\{2\},\{3\}\}$ &$\{1,2,3\}$ &1\\ \hline
\end{tabular}
\caption{Example of the minimax}
\end{figure}
\end{example}
~\\
~\\
It is known that $T(n,m) = \underset{k=1}{\overset{m}{\sum}}S(m-1,k-1)\cdot k^{n-m}$, where $S(m,k)$ represents the $(m,k)^{\text{th}}$ entry in the array of Stirling numbers of the second kind  \cite{OEIS}. 
\\
An equivalent definition of the minimax in the context of graph compositions is defined as follows.
\begin{definition}
If $G$ is a labelled graph with $V(G) = \{1,2,...,n\}$, then $v\in V(G)$ is the minimax vertex of a graph composition of $G$ if its label is the smallest in the set of all maximum labels over all components of the composition.
\end{definition}
~\\
We will use $k(n,m)$ to denote the number of compositions of $K_n$ which has minimax vertex $m$. It is trivial to note that $k(n,m) = T(n,m)$; note also that the minimax vertex of a graph composition is unique which allows us to sort graph compositions via the minimax vertex. This idea, although trivial, is important later in this work. 

\begin{theorem} \label{thm1}
$C(K_n^{-K_m})=k(n+1,m+1) $.
\end{theorem}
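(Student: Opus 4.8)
The plan is to exhibit an explicit bijection between the graph compositions of $K_n^{-K_m}$ and the set partitions of $\{1,2,\dots,n+1\}$ having minimax $m+1$ (equivalently, the compositions of $K_{n+1}$ whose minimax vertex is $m+1$, which by the discussion preceding the theorem are counted by $k(n+1,m+1)$). First I would fix coordinates as in the proof of Theorem 1: write $K_n^{-K_m}$ as the join of a clique $A$ on the $n-m$ vertices not belonging to the deleted $K_m$ with an independent set $M$ on the remaining $m$ vertices. A nonempty vertex subset induces a connected subgraph exactly when it meets $A$ or is a single vertex of $M$, so a graph composition of $K_n^{-K_m}$ is precisely the following data: a set partition of $A$ into $k\ge 0$ nonempty blocks, together with, for each vertex of $M$, a choice of one of the $k$ blocks to join or the decision to stand alone as a singleton. (When $n=m$ we have $A=\emptyset$, $k=0$, and the only composition is the all-singletons partition.)

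Next I would pin down which partitions of $\{1,\dots,n+1\}$ have minimax $m+1$. Call $\{m+2,\dots,n+1\}$ the large elements ($n-m$ of them) and $\{1,\dots,m\}$ the small elements ($m$ of them). A partition $P$ has minimax $m+1$ if and only if the block $B_0\ni m+1$ satisfies $B_0\subseteq\{1,\dots,m+1\}$ — so that $\max B_0=m+1$ — and every other block contains at least one large element — so that its maximum exceeds $m+1$; the remaining small elements $\{1,\dots,m\}\setminus B_0$ are then distributed arbitrarily among the non-$B_0$ blocks.

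The bijection identifies the large elements with $A$ and the small elements with $M$. From a composition of $K_n^{-K_m}$ I form the partition of $\{1,\dots,n+1\}$ whose block containing $m+1$ is $\{m+1\}$ together with the $M$-vertices that were made singletons, and whose remaining blocks are the blocks of $A$, each augmented by the $M$-vertices assigned to it. Conversely, given a minimax-$(m+1)$ partition, deleting $m+1$ from $B_0$ recovers the set of $M$-singletons, and restricting the non-$B_0$ blocks to $A$ recovers both the partition of $A$ and the assignment of the remaining $M$-vertices. These maps are mutually inverse, and the image of the first always has minimax exactly $m+1$ because each non-$B_0$ block inherits a large element (hence maximum $>m+1$) while $\max B_0=m+1$; the degenerate cases $m=0$, $m=n$, and $n=m$ are checked directly. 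This yields $C(K_n^{-K_m})=k(n+1,m+1)$.

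I expect the main obstacle to be the structural lemma of the second paragraph — making precise why the condition "every block other than $B_0$ contains a large element" is simultaneously forced by, and sufficient for, the minimax being $m+1$, and dispatching the boundary cases — rather than the bijection itself, which is essentially forced once both sides are described in these terms. A less conceptual alternative would be to combine the direct count $C(K_n^{-K_m})=\sum_{k\ge 0}S(n-m,k)(k+1)^m$ coming from the first paragraph with the cited closed form for $T(n,m)$, or to verify that $k(n+1,m+1)$ obeys the recursion of the Corollary with matching initial values.
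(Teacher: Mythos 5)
Your proposal is correct and is essentially the paper's own argument: the correspondence you describe (block of $m+1$ equals $\{m+1\}$ together with the $M$-singletons, all other blocks carrying a large element from the clique part) is exactly the paper's bijection obtained by deleting/adding vertex $m+1$ between minimax-$(m+1)$ compositions of $K_{n+1}$ and compositions of $K_n^{-K_m}$. The only difference is that you spell out the structural characterizations (compositions of the join, and partitions with minimax $m+1$) that the paper leaves terse.
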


\begin{proof}The result will be proved by establishing a bijection between the set of all compositions of $K_{n+1}$ with minimax vertex $m+1$ and some graph isomorphic to $K_n^{-K_m}$. Let $M$ denote the set of vertices $\{1,2,...,m\}$ and $G$ be the graph such that $V(G)= V(K_{n+1})\backslash\{m+1\}$
and $E(G) = \{\{uv\}: u\not\in M \text{ or } v\not\in M\}$. Observe that $G \cong K_n^{-K_m}$. Consider a composition $C$ of $K_{n+1}$ such that the minimax vertex is $m+1$ and denote the component containing $m+1$ by $\mathcal{C}_{m+1}$. It is necessary that no component of $C$ consists solely of vertices from $M$ and that $V(\mathcal{C}_{m+1})\backslash M = \{m+1\}$, otherwise $m+1$ will not be the minimax vertex. The deletion of $m+1$ and all its incident edges from $C$ will yield a unique composition of $G$; conversely, adding $m+1$ to any composition of $G$ and connecting it to all singleton vertices of $M$ will yield a unique composition of $K_{n+1}$ such that the minimax vertex is necessarily $m+1$. Hence the theorem is proved.
\end{proof}

\begin{remark}
Theorem \ref{thm1} also provides an explicit formula for $C(K_n^{-K_m})$; namely $C(K_n^{-K_m}) = k(n+1,m+1) = T(n+1,m+1) = \underset{k=1}{\overset{m+1}{\sum}}S(m,k-1)\cdot k^{n-m}$.
\end{remark}
\begin{corollary}
$\underset{m=0}{\overset{n}{\sum}}C(K_n^{-K_m})  = B(n+1)$. 
\end{corollary}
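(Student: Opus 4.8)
The plan is to reduce everything to Theorem~\ref{thm1} and then invoke the uniqueness of the minimax vertex. First I would apply Theorem~\ref{thm1} termwise to rewrite the left-hand side as
\[
\sum_{m=0}^{n} C(K_n^{-K_m}) \;=\; \sum_{m=0}^{n} k(n+1,m+1) \;=\; \sum_{m'=1}^{n+1} k(n+1,m'),
\]
after the reindexing $m' = m+1$. So it suffices to show that $\sum_{m'=1}^{n+1} k(n+1,m') = B(n+1)$.

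Next I would recall (as noted in the text preceding Theorem~\ref{thm1}) that compositions of the complete graph $K_{n+1}$ are exactly the set partitions of $\{1,2,\dots,n+1\}$, and that every such composition has a \emph{unique} minimax vertex. Consequently the family of all compositions of $K_{n+1}$ is partitioned according to which vertex is its minimax vertex, and $k(n+1,m')$ is precisely the size of the class whose minimax vertex is $m'$. Summing the sizes of these classes over all admissible values of $m'$ therefore counts every composition of $K_{n+1}$ exactly once, i.e. gives the total number of set partitions of an $(n+1)$-element set, which is $B(n+1)$.

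The only point requiring a word of care is the range of the index: one should check that the possible minimax vertices of a partition of $\{1,\dots,n+1\}$ are exactly $1,2,\dots,n+1$, so that the reindexed sum $\sum_{m'=1}^{n+1}$ is neither missing nor double-counting a class. The upper value $n+1$ is attained by the one-block partition, the lower value $1$ by any partition having $\{1\}$ as a block, and no value outside this range can occur since each block maximum lies in $\{1,\dots,n+1\}$; equivalently, $k(n+1,m') = 0$ for $m'$ outside $\{1,\dots,n+1\}$, so extending or restricting the range is harmless. I do not expect any genuine obstacle here — the content is entirely carried by Theorem~\ref{thm1} together with the uniqueness of the minimax vertex.
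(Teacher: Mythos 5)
Your proposal is correct and follows essentially the same route as the paper: apply Theorem~\ref{thm1} termwise, reindex, and use the uniqueness of the minimax vertex to sort all compositions of $K_{n+1}$ (whose total count is $C(K_{n+1}) = B(n+1)$) by their minimax value. The only difference is cosmetic — you argue from the sum toward $B(n+1)$ while the paper runs the chain of equalities in the opposite direction.
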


\begin{proof}
It is obvious that since compositions have a unique minimax value, all compositions of $K_{n+1}$ can be sorted by the aforementioned value. Summing  all compositions of $K_{n+1}$ over all possible minimax values will yield the total number of compositions; hence, with the help of Theorem \ref{thm1},\\ $B(n+1) = C(K_{n+1}) = \underset{m=1}{\overset{n+1}{\sum}}k(n+1, m) = \underset{m=0}{\overset{n}{\sum}}k(n+1,m+1) = \underset{m=0}{\overset{n}{\sum}}C(K_n^{-K_m})$.
\end{proof}
~\\
One final avenue which will be explored here is a possible extension of the concept of the minimax vertex. In the definition given, minimax vertices were defined without regard to the number of vertices in a component. This concept is now considered with a restriction placed on the number of vertices in the component containing the minimax vertex. Let $k_j(n,m)$ denote the number of compositions of $K_n$ such that $m$ is the minimax vertex of all components containing no more than $j$ vertices. It was shown in \cite{TM} that $C(K_n^{-G})$ can be expressed as the sum of multiples of the Bell numbers. The full result is given as the following lemma. 
\begin{lemma}
Let $G$ be a subgraph of $K_N$ such that $|V(G)| = n$ and $b_{j,k,n}$ denote the number of ways of choosing $k$ disjoint components of $K_N^{-G}$ such that the cardinality of the union of vertices of all $k$ components is $j$. If we define $b_{j,n} = \underset{k=0}{\overset{n}{\sum}}(-1)^k\cdot b_{j,k,n}$, then $C(K_N^{-G}) = \underset{j=0}{\overset{n}{\sum}}b_{j,n}B_{n-j}$.
\end{lemma}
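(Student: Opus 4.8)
The plan is to prove the identity by inclusion--exclusion over the blocks a set partition of $V(K_N)$ is forbidden from using. First I would record the dictionary between the two sides: by definition a composition of $K_N^{-G}$ is a partition of $V(K_N)$ each of whose blocks induces a connected subgraph of $K_N^{-G}$, so, calling a vertex set \emph{forbidden} when it induces a \emph{disconnected} subgraph of $K_N^{-G}$, the number $C(K_N^{-G})$ counts exactly the partitions of $V(K_N)$ that contain no forbidden block. The one structural fact needed is that every forbidden set is contained in $V(G)$: a vertex of $V(K_N)\setminus V(G)$ is adjacent in $K_N^{-G}$ to all other vertices, so any block meeting $V(K_N)\setminus V(G)$ is automatically connected. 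Hence a family of $k$ pairwise-disjoint forbidden blocks has vertex-union of size at most $n$, and for each $j$ the number of such families whose union has size $j$ is precisely what $b_{j,k,n}$ counts (these $k$ disjoint disconnected vertex sets are what the statement refers to as $k$ disjoint components).

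Given this, the computation is short. For a partition $\pi$ of $V(K_N)$ let $F(\pi)$ be the collection of forbidden blocks occurring in $\pi$; then $[\,F(\pi)=\varnothing\,]=\sum_{T\subseteq F(\pi)}(-1)^{|T|}$, and interchanging the order of summation gives
\[
C(K_N^{-G})=\sum_{\pi}\sum_{T\subseteq F(\pi)}(-1)^{|T|}=\sum_{k\ge 0}(-1)^{k}\,\#\{(\pi,T):|T|=k,\ T\subseteq F(\pi)\}.
\]
For a fixed unordered family $T=\{B_1,\dots,B_k\}$ of pairwise-disjoint forbidden blocks with $\big|\bigcup_{i}B_i\big|=j$, the partitions $\pi$ with $T\subseteq F(\pi)$ are obtained by partitioning the remaining $N-j$ vertices in an arbitrary way, so there are exactly $B_{N-j}$ of them. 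Summing over $j$ and then reorganizing,
\[
C(K_N^{-G})=\sum_{k\ge 0}(-1)^{k}\sum_{j=0}^{n}b_{j,k,n}B_{N-j}=\sum_{j=0}^{n}\Big(\sum_{k=0}^{n}(-1)^{k}b_{j,k,n}\Big)B_{N-j}=\sum_{j=0}^{n}b_{j,n}B_{N-j},
\]
which is the asserted formula (in the setting of the lemma $G$ spans the ambient complete graph, i.e.\ $N=n$, so $B_{N-j}=B_{n-j}$; alternatively, since $j$ never exceeds $n$, the index may be written $n-j$ directly). As a check, when $G$ has no edges there are no forbidden sets, only the term $k=0,\ j=0$ survives with $b_{0,0,n}=1$, and the formula reduces to $C(K_N^{-G})=B_n$.

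I expect the only point requiring care to be the structural observation in the first paragraph: making precise exactly which vertex subsets are forbidden, and in particular that all of them lie inside $V(G)$ --- this is what bounds the $j$-sum by $n$ and is what makes the Bell index depend on $n$ rather than on the size of the ambient graph. The remainder --- the alternating-sum identity, the interchange of summations, the count $B_{N-j}$ of ways to extend a chosen disjoint family of forbidden blocks to a full partition, and the collapse of $\sum_{k}(-1)^{k}b_{j,k,n}$ to $b_{j,n}$ --- is bookkeeping; the one thing to stay consistent about is enumerating \emph{unordered} families of forbidden blocks, so that no overcounting creeps into $b_{j,k,n}$.
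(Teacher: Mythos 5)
The paper itself does not prove this lemma; it is quoted from \cite{TM}, so there is no in-paper argument to compare against. Judged on its own, your inclusion--exclusion over families of pairwise disjoint ``bad'' blocks is sound, and it is clearly the argument that the alternating sum $\sum_k(-1)^k b_{j,k,n}$ is encoding, so the route is surely the intended one. Two points need flagging, though. First, your dictionary step is not a harmless gloss but a repair of the statement: in this paper's own terminology a ``component'' is a \emph{connected} member of a composition, and under that literal reading the identity is false (for $G=K_2$ inside $K_3$ it would give $0$, not $C(K_3^{-K_2})=4$); the count that makes the lemma true is exactly the one you use, namely families of $k$ pairwise disjoint vertex sets each inducing a \emph{disconnected} subgraph of $K_N^{-G}$ (equivalently, sets $S\subseteq V(G)$ whose induced subgraph $G[S]$ is a join of two nonempty parts). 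You should state explicitly that you are proving this corrected reading rather than asserting it as what the statement ``refers to.'' Second, your derivation correctly produces $B_{N-j}$, but your reconciliation with the stated $B_{n-j}$ is half wrong: the clause ``since $j$ never exceeds $n$, the index may be written $n-j$ directly'' is a non sequitur, since $B_{N-j}\neq B_{n-j}$ whenever $N>n$ no matter how small $j$ is, and for the same reason your closing sanity check should yield $B_N$, not $B_n$. The only valid reconciliation is the first one you mention: either take $N=n$ (which covers every use of the lemma in this paper), or read the right-hand side as $\sum_{j=0}^{n}b_{j,n}B_{N-j}$. With those two clarifications your proof is complete, and the structural observation you isolate --- every bad block lies inside $V(G)$ because a vertex outside $V(G)$ is adjacent to all other vertices of $K_N^{-G}$ --- is precisely what makes the coefficients $b_{j,n}$ depend only on $G$ and bounds the $j$-sum by $n$.
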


Given this lemma, it should not be surprising that the composition number of any graph is the sum of multiples of the Bell numbers (since any graph can be viewed as the deletion of its complement from some $K_n$). Taking the lemma above and restricting the type of compositions counted by $b_{j,k,n}$, it should not be surprising that $k_j(n,m)$ in general can also be expressed as the sum of multiples of the Bell numbers. The details of this do not yield any insight and have been omitted; however, the formula for $k_1(n,m)$ is proved below so that formula for the extreme values of $j$ are provided.

\begin{theorem}
 $k_1(n,m) = \underset{j=1}{\overset{m}{\sum}}(-1)^{j+1}\binom{m-1}{j-1}B(n-j)$.
\end{theorem}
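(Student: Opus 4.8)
The plan is to first translate $k_1(n,m)$ into a clean statement about set partitions. A composition of $K_n$ is simply a partition of $[n]=\{1,2,\dots,n\}$, and the components of size at most $1$ are exactly the singleton blocks $\{a\}$, whose (unique) maximum label is $a$. Hence the set of maximum labels over components of size $\le 1$ is $\{a : \{a\}\text{ is a block}\}$, and $m$ is the minimax vertex among such components precisely when (i) $\{m\}$ is a block of the partition and (ii) none of $1,2,\dots,m-1$ is a singleton block. I would state and justify this equivalence carefully; note in particular that the degenerate situation in which a partition has \emph{no} component of size $\le 1$ never contributes here, since condition (i) already supplies one such component.

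Next I would count the partitions satisfying (i) and (ii) by inclusion--exclusion. Let $\mathcal{P}_m$ denote the set of partitions of $[n]$ in which $\{m\}$ is a block; deleting the block $\{m\}$ is a bijection onto the partitions of an $(n-1)$-element set, so $|\mathcal{P}_m| = B(n-1)$. For a subset $T\subseteq\{1,\dots,m-1\}$ with $|T|=t$, the number of partitions of $[n]$ in which $\{m\}$ and every $\{a\}$ with $a\in T$ are all blocks is $B(n-1-t)$, obtained by deleting all $t+1$ of these singletons. Sieving over the ``bad'' events ``$\{a\}$ is a block'' for $a=1,\dots,m-1$ then gives
\[
k_1(n,m)=\sum_{t=0}^{m-1}(-1)^{t}\binom{m-1}{t}B(n-1-t).
\]
Finally, substituting $j=t+1$ and using $(-1)^{t}=(-1)^{j-1}=(-1)^{j+1}$ yields $k_1(n,m)=\sum_{j=1}^{m}(-1)^{j+1}\binom{m-1}{j-1}B(n-j)$, as claimed.

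The only genuinely delicate step is the first one: correctly parsing ``minimax vertex of all components containing no more than $j$ vertices'' for $j=1$ and confirming it is equivalent to conditions (i)--(ii), including the observation that the empty-index-set edge case cannot occur. Once that is secured, the remainder is a routine sieve of the same flavor as the one implicit in the preceding Lemma. One should also double-check the boundary conventions (e.g. the case $m=1$, where the sum collapses to $B(n-1)$, and that $n-j\ge 0$ throughout the summation range), but these present no real difficulty.
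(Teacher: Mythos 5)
Your proposal is correct and follows essentially the same route as the paper: interpret $k_1(n,m)$ as counting partitions of $\{1,\dots,n\}$ in which $\{m\}$ is a singleton block and no smaller label is, then sieve by inclusion--exclusion over the events ``$\{a\}$ is a singleton'' for $a<m$, using $B(n-1-t)$ for each $t$-subset and reindexing. Your version is if anything slightly cleaner, since you sieve directly inside the set of partitions containing $\{m\}$ as a block rather than writing $|C_m|-|\cup_{j<m}C_j|$ as the paper does.
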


\begin{proof}
 If $C_m$ denotes the set of all compositions of $K_n$ which contains the singleton component $\{m\}$, then $C_m\backslash\underset{j=1}{\overset{m-1}{\cup}}C_j$ is the set of compositions of $K_n$ such that $\{m\}$ is the minimum singleton component. This statement implies that $k_1(n,m) = |C_m\backslash\underset{j=1}{\overset{m-1}{\cup}}C_j| = |C_m| - |\underset{j=1}{\overset{m-1}{\cup}}C_j|$.
By the inclusion-exclusion principle,
\\
\\
$$|\underset{j=1}{\overset{m-1}{\cup}}C_j| = \underset{\emptyset \not= J\subseteq \{1,2,...,m-1\}}{\sum}(-1)^{|J|+1}|\underset{j\in J}{\cap}C_j|.$$
The set $\underset{j\in J}{\cap}C_j$ is the set of all compositions of $K_n$ which contain the elements of $J$ and $\{m\}$ as singleton components, which yields \\
$|\underset{j\in J}{\cap}C_j| = B(n - (|J| + 1))$.  Adding that the number of ways of choosing $J$ of fixed cardinality from $\{1,2,...,m-1\}$ is $\binom{m-1}{|J|}$, we get the result
$ |\underset{j=1}{\overset{m-1}{\cup}}C_j| = \underset{j=1}{\overset{m-1}{\sum}}(-1)^{j+1}\binom{m-1}{j}B(n-(j+1))$. Hence
\\
\begin{align*}
 k_1(n,m) &= B(n-1) + \underset{j=1}{\overset{m-1}{\sum}}(-1)^j\binom{m-1}{j}B(n-(j+1)) \\
&= B(n-1) +  \underset{j=2}{\overset{m}{\sum}}(-1)^{j+1}\binom{m-1}{j-1}B(n-j) \\
&=  \underset{j=1}{\overset{m}{\sum}}(-1)^{j+1}\binom{m-1}{j-1}B(n-j)
\end{align*}
\end{proof} 
\noindent A table of values for $k_1(n,m)$ concludes the paper.
\\
\begin{figure}[!ht]
\centering
\begin{tabular}{|c||c|c|c|c|c|c|c|c|c}
\hline
$n\slash m$ & 0 & 1 & 2 & 3 & 4 & 5 & 6 & 7 & 8 \\ \hline\hline
1 & 0 & 1 & & & & & & &\\ \hline
2 & 1 & 1 & 0 & & & & & &\\ \hline
3 & 1 & 2 & 1 & 1 & & & & &\\ \hline
4 & 4 & 5 & 3 & 2 & 1 & & & & \\ \hline
5 & 11 & 15 & 10 & 7 & 5 & 4 & & & \\ \hline
6 & 41 & 52 & 37 & 27 & 20 & 15 & 11 & & \\ \hline
7 & 162 & 203 & 151 & 114 & 87 & 67 & 52 & 41 &\\ \hline
8 & 715 & 877 & 674 & 523 & 409 & 322 & 255 & 203 & 162\\
\end{tabular}
\caption{Table of values for $k_1(n,m)$}
\end{figure}

\end{document}